\documentclass[12pt,a4paper]{amsart}
\usepackage{fullpage,setspace}
\usepackage[T1]{fontenc}
\usepackage{lmodern}
\usepackage{amssymb}
\usepackage{mathrsfs}
\usepackage{tikz}
\usepackage{tikz-cd}
\usetikzlibrary{arrows.meta}
\usepackage[numbers,sort&compress]{natbib}
\definecolor{dark-red}{rgb}{0.5,0.15,0.15}
\usepackage[colorlinks=true,linkcolor=black,citecolor=dark-red,urlcolor=dark-red]{hyperref}
\hypersetup{pdftitle={The homotopy types of directed path and trace spaces},pdfauthor={Philippe Gaucher}}
\swapnumbers

\makeatletter
\let\leq\@undefined
\let\geq\@undefined
\let\vec\@undefined
\let\phi\@undefined
\let\epsilon\@undefined
\let\injlim\@undefined
\let\projlim\@undefined
\makeatother
\newcommand{\leq}{\leqslant}
\newcommand{\geq}{\geqslant}
\newcommand{\vec}{\overrightarrow}
\newcommand{\phi}{\varphi}
\newcommand{\epsilon}{\varepsilon}
\newcommand{\injlim}{\varinjlim}
\newcommand{\projlim}{\varprojlim}

\newtheorem{theorem}{Theorem}[section]
\newtheorem{proposition}[theorem]{Proposition}
\newtheorem{lemma}[theorem]{Lemma}
\newtheorem{corollary}[theorem]{Corollary}
\theoremstyle{definition}

\newtheorem{remark}[theorem]{Remark}
\newtheorem{notation}[theorem]{Notation}
\newcommand{\I}{[0,1]}
\newcommand{\kd}{k_{\Delta}}
\newcommand{\dP}{\overrightarrow{\mathcal P}}
\newcommand{\dT}{\overrightarrow{\mathcal T}}
\newcommand{\TopD}{\mathsf{Top}_{\Delta}}
\newcommand{\id}{\operatorname{id}}

\title[Directed path and trace spaces]
{The homotopy types of directed path and trace spaces}
\author[P. Gaucher]{Philippe Gaucher}
\address{Universit\'e Paris Cit\'e, CNRS, IRIF, F-75013, Paris, France}
\urladdr{\url{https://www.irif.fr/~gaucher}}
\keywords{directed space, saturated directed structure, trace space, weak homotopy equivalence, $\Delta$-generated space}
\subjclass[2020]{Primary 55P10; Secondary 54B15, 68Q85}

\begin{document}

\begin{abstract}
We construct a saturated directed space with a Hausdorff $\Delta$-generated underlying space and two distinct points such that the trace space between them is homeomorphic to a square, whereas the directed path space has a nontrivial fundamental group. In particular, the canonical quotient map is not a weak homotopy equivalence. The same conclusion holds for regular directed paths modulo increasing homeomorphisms.
\end{abstract}
\maketitle
\setcounter{tocdepth}{1}
\tableofcontents
\hypersetup{linkcolor = dark-red}

\section{Introduction}

\subsection*{Presentation} Directed spaces are topological spaces equipped with a distinguished collection of continuous paths, called \emph{directed paths}, which contains all constant paths and is closed under continuous nondecreasing reparametrization and concatenation \cite{DAT_book}. They provide geometric models of concurrent processes, notably in cubical and globular settings.

In the cubical setting, geometric realizations of precubical sets carry directed paths obtained by concatenating paths that are nondecreasing in each cubical coordinate \cite{DAT_book}. In the globular setting, the directed paths associated with a cellular multipointed $d$-space are constant paths or nondecreasing reparametrizations of restrictions of execution paths to subintervals \cite[Theorem~4.9]{GlobularNaturalSystem}.

A trace is an equivalence class of directed paths under the relation generated by continuous nondecreasing reparametrizations that preserve both endpoints. For the geometric realization $X=|K|$ of an arbitrary precubical set $K$, the canonical quotient
\[
q_{u,v}: \dP(X)(u,v)\longrightarrow\dT(X)(u,v)
\]
is a homotopy equivalence for every pair $u,v\in X$ \cite[Proposition~2.16(3)]{MR2521708}. For a $q$-cofibrant multipointed $d$-space $Y$, in particular for a cellular one, the analogous quotient of the execution path space is a homotopy equivalence for endpoints $u,v\in Y^0$, where $Y^0$ is the distinguished set of states; see \cite[Theorem~16]{Moore3}.

To the author's knowledge, it was previously unknown whether this quotient must be a weak homotopy equivalence for every directed space. Our main result is the following.

\begin{samepage}
\begin{theorem}\label{thm:main}
	There exist a saturated directed space $X$ with a Hausdorff $\Delta$-generated underlying space and distinct points $u,v\in X$ such that $\dT(X)(u,v)$ is homeomorphic to $[-2,2]^2$ and $\dP(X)(u,v)$ has a nontrivial fundamental group at a suitable basepoint. In particular, the canonical quotient
	\[
	q_{u,v}: \dP(X)(u,v)\longrightarrow\dT(X)(u,v)
	\]
	is not a weak homotopy equivalence.
\end{theorem}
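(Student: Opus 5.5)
The plan is to build $X$ explicitly as a compact subspace of some $\mathbb{R}^n$, so that the underlying space is automatically Hausdorff. Then I would check $\Delta$-generatedness directly, since every point will have a neighbourhood that is a finite union of images of simplices. Next I would declare the directed paths to be the smallest saturated family containing a chosen family of generating paths $\gamma_{a,b}$, $(a,b)\in[-2,2]^2$, all running from $u$ to $v$. The guiding idea is that the fibres of $q_{u,v}$ over traces of regular paths are contractible (spaces of reparametrizations). So the failure of $q_{u,v}$ to be a weak equivalence cannot come from the fibres. It has to come from the failure of $q_{u,v}$ to be a quasifibration, that is, from the interaction between the compact-open topology on $\dP(X)(u,v)$ and the quotient topology on $\dT(X)(u,v)$. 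I would therefore design the family so that the traces vary continuously in $(a,b)$ and are pairwise distinct. At the same time, the parametrized paths should degenerate near one special trace, say $(0,0)$, in a way that is invisible after quotienting by reparametrization.

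Concretely, I would let $\gamma_{a,b}$ pass through a small ``detour'' whose geometric size tends to $0$ as $(a,b)\to(0,0)$, while its direction is governed by the angle of $(a,b)$. A representative path with the detour traversed at unit speed then has, as $(a,b)\to(0,0)$, limits in the compact-open topology that depend on that angle. Only those limits that stop for a positive time at the collapse point fit continuously with a given direction of approach. The steps in order are the following.

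\begin{enumerate}
\item Define $X$ and the generating paths, then identify the saturated closure. Here I would show that every directed path from $u$ to $v$ is a nondecreasing reparametrization of exactly one $\gamma_{a,b}$, with the detour possibly traversed with pauses.
\item Prove that $(a,b)\mapsto[\gamma_{a,b}]$ is a continuous bijection $[-2,2]^2\to\dT(X)(u,v)$, and that it is a homeomorphism. Since the square is compact, the key point is that $\dT(X)(u,v)$ is Hausdorff; I would get this by separating traces through their images in $X$ together with the order in which points are visited.
\item Exhibit a loop in $\dP(X)(u,v)$ whose image in $\dT(X)(u,v)$ is a small circle around $(0,0)$, and prove that it is not null-homotopic. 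For this I would construct a continuous ``winding'' invariant on $\dP(X)(u,v)$, for instance the angle of the point of the path reached at the time it first leaves a small ball. I would arrange this angle to be well defined and continuous on all of $\dP(X)(u,v)$ by making all paths near the degenerate trace pass through the detour with a forced ordering. The invariant then lands in $S^1$ and has degree one on the loop.
\end{enumerate}

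The main obstacle is step 3, together with its compatibility with step 2. If the winding invariant is continuous on all of $\dP(X)(u,v)$, it seems to descend to the trace space, which would contradict the trace space being a square. So the invariant must genuinely use the parametrization, for example a quantity like ``the time spent before reaching the detour, relative to the time spent inside it''. Such a quantity is not invariant under reparametrization, but it is continuous on the space of paths minus a set that the loop avoids. The delicate part is to choose the geometry so that this set has codimension at least two in $\dP(X)(u,v)$ without creating new identifications among traces. I would first try making the detour a small cone whose apex is the collapse point, using saturation to control which pauses are forced.
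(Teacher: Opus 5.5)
Your proposal leaves step~3 open, and the mechanism you sketch for it cannot work. Suppose the detour of $\gamma_{a,b}$ shrinks to the collapse point as $(a,b)\to(0,0)$, and you traverse it during a fixed time window. Shrinking the radius at fixed angle is then a homotopy, uniform in the angle, from your small loop to a constant loop. That constant loop sits at the path that runs along $\gamma_{0,0}$ and pauses at the collapse point during the window. This path is directed, being a reparametrization of $\gamma_{0,0}$. So the loop you want to detect is null-homotopic in $\dP(X)(u,v)$.

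This is not specific to your design. Take $X$ metrizable (for instance $X\subset\mathbb{R}^n$) with a saturated structure. A representative depending continuously on the trace, such as Morse's $\mu$-length parametrization (continuous for the Fréchet distance), is directed by saturation and gives a section $\sigma$ of $q_{u,v}$. Writing $x=\sigma q_{u,v}(x)\circ\phi_x$, the paths $\sigma q_{u,v}(x)\circ\bigl((1-\lambda)\id_I+\lambda\phi_x\bigr)$ give $\sigma q_{u,v}\simeq\id$. So you should not expect a counterexample inside $\mathbb{R}^n$ at all.

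Your fallback does not help either. An invariant defined only off a set $S$ shows at most that the loop is essential in $\dP(X)(u,v)\setminus S$. ``Codimension two'' has no general-position meaning in a path space with the compact-open topology, so a null-homotopy may simply cross $S$.

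Step~1 also fails as designed. If distinct generators share the collapse point (the apex of your cone), closure under restriction and concatenation yields paths that follow one generator up to that point and another afterwards. Traces are then no longer indexed by the square. The paper arranges $J_b\cap J_{b'}=\{u,v\}$ precisely to exclude this.

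The missing idea is that the obstruction has to be point-set topological rather than a winding number, and this forces a non-metrizable space. The paper takes a compact Hausdorff split space $Z$: a square's worth of arcs from $u$ to $v$, modified over the unit circle $A$. Over each $a\in A$, the midpoint of the arc is replaced by an interval, recorded by an extra coordinate $t_a$; there is one such coordinate for each of uncountably many $a$.

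Over a compact $C\subset B$ meeting $A$ in countably many points, the weighted height $h_C$ trivializes the family. This is enough to lift curves through convergent sequences and identify the trace space with $B$.

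Yet a filling $L$ of the boundary loop $\ell$ is impossible. By the no-retraction theorem, $QL$ would be surjective. Then $e_L$ would restrict to a continuous surjection from the compact metrizable space $e_L^{-1}(M)$ onto the middle slice $M$. The uncountably many pairwise disjoint nonempty open sets $U_a=M\cap\{t_a>3/5\}$ would pull back to uncountably many disjoint nonempty open sets in a second countable space, which is impossible.
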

\end{samepage}

Thus this property does not follow from the axioms of directed spaces alone, even when the directed structure is saturated and the underlying space is Hausdorff (and hence $T_1$).

The example shows that directed path spaces and trace spaces can have different weak homotopy types even when they arise from the same directed space. For applications to computer science, this raises the question of which homotopy type captures the intended behavior: that of directed paths or that of traces, in which changes of speed and waiting time are declared unobservable?

\subsection*{Geometric idea} Start with a family $B\times I$ of directed intervals $I=[0,1]$, indexed by the square $B=[-2,2]^2$. Replace the midpoint of each fiber indexed by a point of the unit circle $A\subset B$ by an interval. Figure~\ref{fig:F-three-dimensional} depicts the resulting family $F$. Then collapse all lower endpoints to one point and all upper endpoints to another, so that the resulting intervals share exactly their two endpoints. 

Before these identifications, the family admits continuous fiberwise parametrizations over compact subsets of $B$ meeting $A$ in at most countably many points (Lemma~\ref{lem:countable}). These parametrizations yield lifts of enough curves to identify the trace space with the square (Proposition~\ref{prop:trace-square}). On the other hand, a filling of the boundary loop \eqref{eq:boundary-loop} in the directed path space would force the evaluation map to cover every fiber. Restricting this map over the closed slice \eqref{eq:closed-slice} would give a continuous surjection from a compact metrizable space onto a space with uncountably many pairwise disjoint nonempty open subsets; see \eqref{eq:manyopens}. Their inverse images would contradict second countability.

The space $F$ belongs to the class of split compact spaces described in \cite[Definition~2.1]{Koszmider16}. The underlying construction method is not new: here the points of $A\times\{1/2\}\subset B\times I$ are replaced by intervals.

\subsection*{Regular directed paths} The boundary loop is a family of regular directed paths, and the parametrizations used to establish the quotient topology are regular as well: none has a nontrivial interval of constancy. Consequently, the quotient map from regular directed paths to their classes under increasing homeomorphisms need not be a weak homotopy equivalence (Corollary~\ref{cor:regular}). This also holds for the ordinary compact-open topology.

In \cite[Remark~4.6]{reparam}, the quotient map $Q$, corresponding to our $q_{x,y}$, is only asserted to induce surjections on homotopy groups. Our example does not contradict this assertion, since its trace space is contractible. The separate quotient map $Q_R$ from regular directed paths to regular directed traces is marked $\simeq$ in the same diagram, on p.~114. Corollary~\ref{cor:regular} contradicts this weak homotopy equivalence assertion for $Q_R$. The corresponding assertion for ordinary, not necessarily directed, regular paths is \cite[Corollary~3.5]{reparam}, whose proof infers a fibration from the freeness of the reparametrization action. Freeness alone does not imply that an orbit map is a fibration. The homeomorphism between the two trace spaces in \cite[Corollary~4.5]{reparam} is not contradicted by our example.

\section{Definitions}

Write $I=\I$. We use the French convention: compact means quasi-compact and Hausdorff. Products and subspaces have their ordinary topologies unless $\Delta$-kelleyfication is explicitly indicated.

The category of $\Delta$-generated topological spaces is the final closure of the full subcategory of standard topological simplices in the category of ordinary topological spaces. The right adjoint of the inclusion into ordinary topological spaces is the $\Delta$-kelleyfication $\kd$. Thus $\kd Y$ has the final topology for the continuous maps $I\to Y$, and $Y$ is $\Delta$-generated when $\kd Y=Y$. The category of $\Delta$-generated spaces is denoted by $\TopD$. We use the coreflection property of $\kd$: for a $\Delta$-generated source $K$, a map $K\to Y$ is continuous if and only if it is continuous as a map $K\to\kd Y$ \cite{FR}.

A \emph{directed space} in Grandis's sense is a space equipped with a set of continuous paths containing all constant paths and closed under concatenation and continuous nondecreasing reparametrization \cite[\S1.1, p.~284]{mg}. We use \emph{saturated} in the sense of \cite[Definition~4.3]{reparam}: if $x: I\to X$ is continuous and $\phi: I\to I$ is a continuous nondecreasing surjection, then directedness of $x\phi$ implies directedness of $x$.

A path is called \emph{regular} if it is constant or has no nontrivial stop interval \cite[Definition~1.1]{reparam}.

For $u,v\in X$, the \emph{directed path space} $\dP(X)(u,v)$ is the set of directed paths from $u$ to $v$, equipped with the $\Delta$-kelleyfication of the subspace topology inherited from the compact-open topology on $C(I,X)$. The \emph{trace space} $\dT(X)(u,v)$ is its quotient in $\TopD$ by the equivalence relation generated by $x\sim x\phi$, where $\phi: I\to I$ ranges over the continuous nondecreasing surjections.

We will repeatedly use the compact-open exponential law and continuity of evaluation \cite[Theorems~46.10--46.11]{zbMATH01461253}, together with the coreflection property recalled above. Specifically, if $X=\kd Z$ and $K$ is a finite polyhedron, a set map $f: K\to\dP(X)(u,v)$ is continuous if and only if its adjoint $(k,t)\mapsto f(k)(t)$ is continuous as a map $K\times I\to Z$. Indeed, $K\times I$ is $\Delta$-generated, so a continuous adjoint into $Z$ factors continuously through $X$. The exponential law then gives continuity into the ordinary directed path space, and the coreflection property for $K$ gives continuity into its $\Delta$-kelleyfication. The converse follows from continuity of evaluation and of the identity $X\to Z$.

\section{A compact family of directed intervals}

\begin{figure}[htbp]
	\centering
	\begin{tikzpicture}[scale=.88,>=Stealth]
		\draw[thick] (-2,-2) rectangle (2,2);
		\draw[thick] (0,0) circle (1);
		\node at (1.55,1.55) {$B$};
		\node at (-.88,1.) {$A$};
		\fill (.7071,.7071) circle (1.7pt);
		\node[above right] at (.7071,.7071) {$a$};
		\node[below] at (0,-2.2) {Circle $A\subset B$};
		\begin{scope}[shift={(5,-1.5)},x=3cm,y=3cm]
			\draw[->] (0,0) -- (1.13,0) node[right] {$t$};
			\draw[->] (0,0) -- (0,1.12) node[above] {$c(t)$};
			\draw[thick] (0,0)--(1/3,1/2)--(2/3,1/2)--(1,1);
			\draw[densely dotted] (0,1/2)--(1/3,1/2);
			\draw (1/3,.015)--(1/3,-.015) node[below] {$\frac13$};
			\draw (2/3,.015)--(2/3,-.015) node[below] {$\frac23$};
			\node[left] at (0,1/2) {$\frac12$};
			\node[below] at (1,0) {$1$};
			\node[left] at (0,1) {$1$};
		\end{scope}
	\end{tikzpicture}
	\caption{The indexing square $B$, the circle $A$, and the plateau map $c$ used to insert a middle interval in each exceptional fiber.}
	\label{fig:construction}
\end{figure}

Consider the square $B=[-2,2]^2$ and the circle $A=\{a\in\mathbb R^2:\|a\|=1\}$, where $\|\cdot\|$ denotes the Euclidean norm; see Figure~\ref{fig:construction}.

We now construct the compact family of intervals described in the introduction. Consider the projection $B\times I\to B$, with height coordinate $s\in I$. For each $a\in A$, we replace the midpoint of the fiber over $a$ by an interval. The height $s$ is constant on this inserted interval, so we introduce an additional coordinate $t_a$ that records progress along it. Away from the fiber over $a$, this coordinate is uniquely determined by the pair $(b,s)$. In particular, on each fixed fiber over $b\notin A$, the height $s$ determines all the additional coordinates.

Define the continuous nondecreasing surjection $c: I\to I$ by
\begin{equation}\label{eq:plateau}
c(t)=
\begin{cases}
\dfrac32t,&0\leq t\leq\dfrac13,\\[8pt]
\dfrac12,&\dfrac13\leq t\leq\dfrac23,\\[8pt]
\dfrac{3t-1}{2},&\dfrac23\leq t\leq1.
\end{cases}
\end{equation}
For every $a\in A$, let
\begin{equation}\label{eq:ga}
\delta_a(b)=\min\{1,\|b-a\|\},
\qquad
g_a(b,t)=\delta_a(b)t+(1-\delta_a(b))c(t).
\end{equation}
When $b\ne a$, the map $g_a(b,-)$ is a strictly increasing homeomorphism of $I$; when $b=a$, it is $c$. Thus the relation $g_a(b,t_a)=s$ gives a unique $t_a$ when $b\ne a$, whereas $g_a(a,t_a)=1/2$ supplies exactly the desired middle interval.

Define
\begin{equation}\label{eq:F}
F=\left\{(b,s,(t_a)_{a\in A})\in B\times I\times I^A:
g_a(b,t_a)=s\text{ for all }a\in A\right\},
\end{equation}
where $I^A=\prod_{a\in A}I$ has the product topology, and let $p: F\to B$ be the first projection. Since each $g_a$ is continuous, the defining equations make $F$ a closed subspace of $B\times I\times I^A$, which is compact by Tychonoff's theorem. Hence $F$ is compact. Write $F_b=p^{-1}(\{b\})$. Each $F_b$ is compact as well, being a closed subspace of $F$. The fiber $F_b$ is called \emph{exceptional} when $b\in A$.

\begin{figure}[htbp]
	\centering
	\begin{tikzpicture}[
		x={(.92cm,0cm)}, y={(.40cm,.28cm)}, z={(0cm,4.3cm)},
		>=Stealth, font=\small,
		edge/.style={gray!60,thin},
		ordinary/.style={teal!70!black,thick},
		exceptional/.style={dark-red!65!black,thin},
		inserted/.style={dark-red!75!black,line width=2.5pt}
		]
		
		\fill[gray!7]
		(-2,-2,0)--(2,-2,0)--(2,2,0)--(-2,2,0)--cycle;
		\draw[edge]
		(-2,-2,0)--(2,-2,0)--(2,2,0)--(-2,2,0)--cycle;
		\foreach \x/\y in {-2/-2,2/-2,2/2,-2/2}
		\draw[edge,densely dashed] (\x,\y,0)--(\x,\y,1);
		\draw[edge]
		(-2,-2,1)--(2,-2,1)--(2,2,1)--(-2,2,1)--cycle;
		
		\draw[dark-red!65!black,densely dashed]
		plot[domain=0:360,samples=81,smooth]
		({cos(\x)},{sin(\x)},0);
		\node[left,dark-red!65!black] at (-1,0,0) {$A$};
		\node[below,yshift=-10pt] at (0,-2,0)
		{$E_-\cong B=[-2,2]^2$};
		\node[above] at (0,2,1) {$E_+\cong B$};
		
		\foreach \angle in {72,144,0,216,288}{
			\draw[exceptional]
			({cos(\angle)},{sin(\angle)},0)--
			({cos(\angle)},{sin(\angle)},1);
			\draw[inserted]
			({cos(\angle)},{sin(\angle)},1/3)--
			({cos(\angle)},{sin(\angle)},2/3);
			\draw[exceptional,->]
			({cos(\angle)},{sin(\angle)},.77)--
			({cos(\angle)},{sin(\angle)},.90);
		}
		\fill[dark-red!75!black] (1,0,0) circle (1.5pt);
		\node[below right,dark-red!65!black] at (1,0,0) {$a$};
		\node[right,dark-red!65!black] at (1,0,.88) {$F_a$};
		
		\draw[ordinary] (-1.5,-1.5,0)--(-1.5,-1.5,1);
		\draw[ordinary,->] (-1.5,-1.5,.72)--(-1.5,-1.5,.88);
		\fill[teal!70!black] (-1.5,-1.5,.5) circle (2pt);
		\fill[teal!70!black] (-1.5,-1.5,0) circle (1.5pt);
		\node[below left,teal!70!black] at (-1.5,-1.5,0)
		{$b\notin A$};
		\node[left,teal!70!black] at (-1.5,-1.5,.83) {$F_b$};
		\node[left,teal!70!black] at (-1.5,-1.5,.5)
		{$s=\tfrac12$};
		
		\begin{scope}[shift={(4cm,.65cm)},x=3.2cm,y=3.2cm]
			\node[above] at (.5,1.13) {$F_a:\quad s=c(t_a)$};
			\draw[->] (0,0)--(1.14,0) node[right] {$t_a$};
			\draw[->] (0,0)--(0,1.10) node[above] {$s$};
			
			\draw[gray,densely dotted] (0,.5)--(1/3,.5);
			\foreach \t in {1/3,2/3}
			\draw[gray,densely dotted] (\t,0)--(\t,.5);
			
			\draw[exceptional,thick]
			(0,0)--(1/3,.5)
			(2/3,.5)--(1,1);
			\draw[inserted] (1/3,.5)--(2/3,.5);
			\draw[dark-red!75!black,->,thick] (.43,.5)--(.59,.5);
			\node[above,dark-red!75!black] at (.5,.53) {$I_a$};
			
			\node[left] at (0,.5) {$\tfrac12$};
			\node[left] at (0,1) {$1$};
			\node[below left] at (0,0) {$0$};
			\node[below] at (1/3,0) {$\tfrac13$};
			\node[below] at (2/3,0) {$\tfrac23$};
			\node[below] at (1,0) {$1$};
			
			\node[below,align=center] at (.5,-.20)
			{$I_a=F_a\cap\{s=\tfrac12\}$\\[3pt]
				$t_a\in[\tfrac13,\tfrac23],\quad
				t_{a'}=\tfrac12\ (a'\ne a)$};
		\end{scope}
	\end{tikzpicture}
	
	\caption{Schematic of $F$, with a few fibers displayed and endpoint slices
		$E_-=\{s=0\}$ and $E_+=\{s=1\}$.
		For $b\notin A$, $s$ parametrizes $F_b$.
		For $a\in A$, the thick segment $I_a$ is inserted at
		$s=\tfrac12$ and parametrized by
		$t_a\in[\tfrac13,\tfrac23]$.
		Vertical position in the left panel indicates fiber order only;
		the drawing is not an embedding of $F$ into $\mathbb R^3$.}
	\label{fig:F-three-dimensional}
\end{figure}

\begin{notation}
	For $h\in I$, we denote the slice $\{(b,s,(t_a)_{a\in A})\in F:s=h\}$ by $\{s=h\}$.
\end{notation}

\begin{lemma}\label{lem:fibers}
Each fiber $F_b$ is homeomorphic to $I$, ordered by $s$ when $b\notin A$ and by $t_b$ when $b\in A$. Its lower endpoint has $s=t_a=0$ for all $a$, and its upper endpoint has $s=t_a=1$ for all $a$. All the coordinates $s,t_a$ are nondecreasing along the fiber.
\end{lemma}
\begin{proof}
If $b\notin A$, every $g_a(b,-)$ is a strictly increasing homeomorphism, so the coordinate $s$ gives a continuous bijection $F_b\to I$. It is a homeomorphism since $F_b$ is compact and $I$ is Hausdorff. Its inverse has coordinates $t_a=g_a(b,-)^{-1}(s)$, all nondecreasing.

If $b=a_0\in A$, the coordinate $t_{a_0}$ instead gives a continuous bijection $F_b\to I$: it determines $s=c(t_{a_0})$, and every remaining coordinate is uniquely determined by the formula $t_a=g_a(b,-)^{-1}(s)$ for $a\ne a_0$. Compactness again makes this bijection a homeomorphism. All these coordinates are nondecreasing as functions of $t_{a_0}$. At $s=1/2$, the coordinate $t_{a_0}$ ranges over $[1/3,2/3]$. The assertions about endpoints follow from $g_a(b,-)^{-1}(\{0\})=\{0\}$ and $g_a(b,-)^{-1}(\{1\})=\{1\}$.
\end{proof}

Consider the endpoint slices
\[
E_-:=\{s=0\},\qquad
E_+:=\{s=1\}.
\]
By Lemma~\ref{lem:fibers}, these slices are, respectively, the sets of lower and upper endpoints of all the fibers, with $t_a=0$ on $E_-$ and $t_a=1$ on $E_+$ for every $a\in A$. The projection $p$ restricts to a homeomorphism from each slice onto $B$. Collapse $E_-$ to a single point $u$ and $E_+$ to a distinct point $v$, leaving every other point unchanged. Denote the quotient map by
\[
\rho: F\longrightarrow Z.
\]
As a subset of $F\times F$, the equivalence relation is
\[
R=\operatorname{diag}_F\cup(E_-\times E_-)\cup(E_+\times E_+),
\]
where $\operatorname{diag}_F$ is the diagonal of $F$. Since $F$ is Hausdorff and $E_-,E_+$ are closed, $R$ is closed in $F\times F$. The quotient by this closed equivalence relation on the compact Hausdorff space $F$ is therefore compact Hausdorff. The continuous coordinates $s$ and $t_a$ descend to continuous coordinates on $Z$, since each is constant on each collapsed set $E_-$ and $E_+$.

\begin{notation}
	We use the same notation $\{s=h\}$ for the corresponding slices of $Z$.
\end{notation}

Set $X=\kd Z$. Since $Z$ is Hausdorff and $\Delta$-kelleyfication refines its topology, $X$ is Hausdorff as well.

\begin{notation}
	Let $J_b=\rho(F_b)$, as a subset of $X$.
\end{notation}

A parametrization $I\to F_b\to Z$ factors continuously through $X$ because $I$ is $\Delta$-generated. Since $X$ is Hausdorff, this makes $J_b$ an embedded interval with its original topology. For $b\ne b'$,
\begin{equation}\label{eq:intersection}
J_b\cap J_{b'}=\{u,v\}.
\end{equation}
The set $F\setminus(E_-\cup E_+)$ is open and saturated for the quotient $\rho$, and every equivalence class in it is a singleton. Thus $\rho$ restricts to a homeomorphism from this set onto $Z\setminus\{u,v\}$, giving a continuous index map
\begin{equation}\label{eq:interior-index}
\beta: Z\setminus\{u,v\}\longrightarrow B.
\end{equation}

\section{The saturated directed structure and its trace topology}

Give each $J_b$ the order transported from $F_b$ along $\rho|_{F_b}$. Declare a continuous path in $X$ to be directed if it is constant or is a nondecreasing path contained in a single $J_b$. Such a path need not traverse all of $J_b$.

\begin{proposition}\label{prop:saturated}
This is a saturated directed structure.
\end{proposition}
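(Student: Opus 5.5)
We must check four things: constant paths are directed, closure under continuous nondecreasing reparametrization, closure under concatenation, and saturation. Throughout, $J_b$ is identified with $I$ via the order-preserving homeomorphism coming from Lemma~\ref{lem:fibers}, transported along $\rho|_{F_b}$. Since $X$ is Hausdorff, $J_b$ carries its original topology. Hence a path contained in $J_b$ is continuous into $X$ exactly when its composite with this identification is continuous into $I$, so ``nondecreasing in $J_b$'' is a statement about a real-valued function.

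Constants are directed by definition. For reparametrization, let $x$ be a nondecreasing path in $J_b$ and $\phi: I\to I$ continuous nondecreasing; then $x\phi$ stays in $J_b$ and is again nondecreasing. The case of a constant $x$ is trivial.

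Concatenation is the first point needing care. The only way two directed paths in different fibers $J_b\neq J_{b'}$ can be concatenated is through a common point, which by \eqref{eq:intersection} must be $u$ or $v$. Suppose the first path ends at $u$. Since it is nondecreasing and $u$ is the minimum of $J_b$, it is constant at $u$, and the concatenation is a reparametrization of the second path. This stays in one fiber, so it is directed. Symmetrically, if the junction is $v$, the second path starts at the maximum $v$ of $J_{b'}$ and is constant. If one path is constant, the concatenation again lies in a single fiber, namely that of the other path. If both lie in the same $J_b$, nondecreasing functions glue to a nondecreasing function. Closure under arbitrary finite concatenations follows by induction.

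Saturation is where I expect the main obstacle. Let $x: I\to X$ be continuous and $\phi$ a continuous nondecreasing surjection with $x\phi$ directed. Because $\phi$ is surjective, $x$ and $x\phi$ have the same image. If $x\phi$ is constant, so is $x$. Otherwise the image lies in a single $J_b$. Write $y=x\phi$, viewed in $I$ via the identification of $J_b$ with $I$. Given $t_1<t_2$ in $I$, pick preimages $r_1,r_2$ with $\phi(r_i)=t_i$. Since $\phi$ is nondecreasing and $t_1<t_2$, we get $r_1<r_2$. Then $x(t_1)=y(r_1)\leq y(r_2)=x(t_2)$, so $x$ is nondecreasing in $J_b$, hence directed. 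The subtle point to verify carefully is that the image of $x$ inside $J_b$ makes $x$ continuous as a map into $J_b\cong I$. This holds because $J_b$ is an embedded subspace of the Hausdorff space $X$.
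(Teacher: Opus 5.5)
Your proof is correct and follows the paper's argument essentially step by step. For concatenation, you use \eqref{eq:intersection} together with the fact that a nondecreasing path in $J_b$ ending at the minimum $u$, or starting at the maximum $v$, must be constant. The paper phrases this as the impossibility of composing two nonconstant paths in distinct intervals, while you phrase it as forcing one of the paths to be constant. For saturation, you use the same choice of preimages $r_1<r_2$ under the surjection $\phi$ as the paper does.
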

\begin{proof}
Constant paths are included, and precomposition by any continuous nondecreasing self-map of $I$ preserves containment and monotonicity. Suppose two nonconstant directed paths are composable. If they lie in distinct intervals, their common endpoint must be $u$ or $v$ by \eqref{eq:intersection}. The first case is impossible because a nonconstant nondecreasing path cannot end at the lower endpoint $u$; the second is impossible because such a path cannot start at the upper endpoint $v$. Thus the two paths lie in the same interval, and their concatenation is nondecreasing. Concatenation with a constant path also preserves the structure.

For saturation, suppose $x: I\to X$ is continuous, $\phi: I\to I$ is a continuous nondecreasing surjection, and $x\phi$ is directed. If $x\phi$ is constant, then so is $x$. Otherwise, surjectivity gives $x(I)=(x\phi)(I)\subseteq J_b$ for some $b$. For $s<t$, choose $r_s\in\phi^{-1}(\{s\})$ and $r_t\in\phi^{-1}(\{t\})$. Monotonicity of $\phi$ implies $r_s<r_t$, whence
\[
x(s)=(x\phi)(r_s)\leq_{J_b}(x\phi)(r_t)=x(t).
\]
Since $J_b$ is an embedded interval, $x$ is a continuous nondecreasing path in it and is therefore directed.
\end{proof}

For the rest of the paper, let $P=\dP(X)(u,v)$. Since $u\ne v$, every element of $P$ is nonconstant and lies in a unique interval $J_b$. It traverses the whole of $J_b$ by the intermediate value theorem. For each $b\in B$, choose a strictly increasing homeomorphism $r_b: I\to J_b$; no continuity in $b$ is assumed. Every path in $P$ with image $J_b$ then has the form $r_b\phi$, where $\phi$ is a continuous nondecreasing surjection. Therefore all such paths have the same trace. Paths belonging to distinct intervals cannot be equivalent, since their images differ and surjective reparametrization preserves the image.

The trace set is thus identified with $B$. Let
\begin{equation}\label{eq:Q}
Q: P\longrightarrow B
\end{equation}
be the set map assigning to a path its interval index.

\begin{lemma}\label{lem:Q-continuous}
The map $Q$ is continuous for the usual topology of $B$.
\end{lemma}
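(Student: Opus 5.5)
Since $P$ carries the $\Delta$-kelleyfication of the compact-open topology, and $\kd$ only refines topologies, it suffices to prove that $Q$ is continuous when $P$ is given the ordinary subspace topology of $C(I,Z)$. Indeed, the identity $X\to Z$ is continuous, so the compact-open topology on $C(I,X)$ is finer than the topology induced from $C(I,Z)$, and $\kd$ refines further. The idea is then to read off the index of a path from its value at a single time, or at a small compact set of times, through the continuous index map $\beta$ of \eqref{eq:interior-index}.

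Fix $x_0\in P$ with $Q(x_0)=b_0$. The first step is to choose a parameter $t_0\in I$ with $x_0(t_0)\notin\{u,v\}$. Such a $t_0$ exists because $x_0$ traverses all of $J_{b_0}$. Take an open neighbourhood $U$ of $b_0$ in $B$. The set $W=\beta^{-1}(U)$ is open in $Z\setminus\{u,v\}$, hence open in $Z$, because $\{u,v\}$ is closed in the Hausdorff space $Z$. It contains $x_0(t_0)$. The sub-basic set $\{x: x(t_0)\in W\}$ is open in the compact-open topology, since $\{t_0\}$ is compact, and it contains $x_0$.

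The second step is to check that every $x$ in this set satisfies $Q(x)\in U$. Here we use that $x$ lies in a single interval $J_{Q(x)}$ and that $x(t_0)\in W\subseteq Z\setminus\{u,v\}$. Then $x(t_0)$ is an interior point of $J_{Q(x)}$, and by the identification $\rho|$ of $F\setminus(E_-\cup E_+)$ with $Z\setminus\{u,v\}$ we have $\beta(x(t_0))=Q(x)$. So $Q(x)\in U$, and $Q$ is continuous at $x_0$.

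I expect no serious obstacle. The only delicate points are the comparison of topologies and the fact that $\beta$ restricted to the interior of $J_b$ is constantly $b$. The latter holds because $\beta$ is defined as $p\circ(\rho|)^{-1}$ and $J_b=\rho(F_b)$ with $p(F_b)=\{b\}$. One should also note that continuity is only needed at each $x_0$ separately, so the choice of $t_0$ is allowed to depend on $x_0$.
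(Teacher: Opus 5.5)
Your proof is correct and is essentially the paper's argument. The paper covers $P$ by the open sets $V_t=\{x\in P: x(t)\notin\{u,v\}\}$ and observes that on each $V_t$ the map $Q$ is $\beta$ composed with evaluation at $t$. That is exactly your pointwise argument with the neighbourhood $\{x: x(t_0)\in\beta^{-1}(U)\}$, after your preliminary reduction from the $\Delta$-kelleyfied topology to the compact-open topology of $C(I,Z)$.
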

\begin{proof}
For $t\in I$, the set
\[
V_t=\{x\in P:x(t)\notin\{u,v\}\}
\]
is open, by continuity of evaluation. These sets cover $P$. On $V_t$, the map $Q$ is the composite of evaluation at $t$, the identity $X\to Z$, and the map $\beta$ in \eqref{eq:interior-index}. Thus $Q$ is continuous on an open cover of its domain.
\end{proof}

\begin{lemma}\label{lem:countable}
	If $C\subset B$ is compact and $C\cap A$ is countable, put $F_C=p^{-1}(C)$.
	Then there is a homeomorphism over $C$
	\[
	\Theta_C: F_C\xrightarrow{\cong}C\times I
	\]
	that is strictly increasing on every fiber and preserves both endpoints.
\end{lemma}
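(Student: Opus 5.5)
The plan is to write down an explicit fiberwise parameter $\tau:F_C\to I$ as a convergent weighted sum of the coordinates, and to take $\Theta_C=(p,\tau)$. Enumerate $C\cap A=\{a_1,a_2,\dots\}$; the set may be finite or empty, in which case the sum below is finite. Define
\[
\tau(b,s,(t_a)_a)=\frac{s+\sum_{n\geq1}2^{-n}t_{a_n}}{1+\sum_{n\geq1}2^{-n}}.
\]
Each coordinate is continuous on $F$ and bounded by $1$, so the series converges uniformly and $\tau$ is continuous. At the endpoints all coordinates are $0$ or $1$ by Lemma~\ref{lem:fibers}, so $\tau=0$ on $E_-$ and $\tau=1$ on $E_+$.

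Next I check strict monotonicity on each fiber $F_b$ with $b\in C$. By Lemma~\ref{lem:fibers} all coordinates $s,t_a$ are nondecreasing along $F_b$, so $\tau$ is nondecreasing. For strictness, take two distinct points of $F_b$. If $b\notin A$, they differ in $s$, and $s$ carries positive weight. If $b\in A$, then $b=a_n$ for some $n$, because $b\in C\cap A$. The two points then differ in $t_{a_n}$, which is the order coordinate of $F_b$, and $t_{a_n}$ has positive weight $2^{-n}$. This step is exactly where countability is used: only countably many coordinates can carry positive weights with a finite total. Hence $\tau|_{F_b}$ is strictly increasing and continuous from an interval onto $[0,1]$, since it takes both endpoint values. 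It is therefore a homeomorphism onto $I$.

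It remains to show that $\Theta_C=(p,\tau):F_C\to C\times I$ is a homeomorphism. It is continuous, and it is bijective because it is bijective on each fiber. The space $F_C$ is a closed subspace of the compact space $F$, and $C\times I$ is Hausdorff. So $\Theta_C$ is a homeomorphism, it lies over $C$, and it preserves both endpoints.

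I expect the only subtle point to be making sure every exceptional fiber over $C$ gets a coordinate with positive weight, which the enumeration guarantees. The rest follows from compactness.
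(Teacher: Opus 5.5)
Your proof is correct and follows the paper's argument. The paper also sets $\Theta_C=(p,h_C)$ with the weighted height $h_C=(s+\sum_n w_n t_{a_n})/(1+\sum_n w_n)$ for an arbitrary summable family of positive weights (your $w_n=2^{-n}$ is one such choice), proves strict monotonicity on fibers by the same case split according to whether $b\in A$, and concludes with the intermediate value theorem and the fact that a continuous bijection from a compact space to a Hausdorff space is a homeomorphism.
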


\begin{proof}
	Since $C$ is closed in $B$, the space $F_C$ is compact. Enumerate the distinct elements of $C\cap A$ as $(a_n)$, with a finite or empty indexing set when appropriate. Choose a summable family of positive weights $(w_n)$ with the same indexing set. Define the function $h_C: F_C\to I$ by
	\begin{equation}\label{eq:weighted-height}
		h_C=\frac{s+\sum_n w_n t_{a_n}}{1+\sum_n w_n}.
	\end{equation}
	Since $0\leq t_{a_n}\leq 1$, the series converges uniformly, so $h_C$ is continuous.

	By Lemma~\ref{lem:fibers}, all the coordinates appearing in \eqref{eq:weighted-height} are nondecreasing on every fiber. Let $x<y$ be two distinct points of a fiber $F_b$, with $b\in C$. If $s(x)<s(y)$, then $h_C(x)<h_C(y)$. Otherwise, $s(x)=s(y)$, and Lemma~\ref{lem:fibers} implies that $b=a_k$ for some $k$, that $x$ and $y$ belong to the inserted middle interval, and that $t_{a_k}(x)<t_{a_k}(y)$. Since $w_k>0$, we again obtain $h_C(x)<h_C(y)$. Thus $h_C$ is strictly increasing on every fiber. Its values at the lower and upper endpoints are respectively $0$ and $1$.

	Since each fiber is an interval, the intermediate value theorem shows that $h_C$ maps it bijectively onto $I$. Thus $\Theta_C=(p,h_C)$ is a continuous bijection. The space $F_C$ is compact, and $C\times I$ is Hausdorff, so $\Theta_C$ is a homeomorphism. It is strictly increasing on every fiber and preserves both endpoints by construction. If $C\cap A=\varnothing$, then $\Theta_C=(p,s)$.
\end{proof}

\begin{lemma}\label{lem:curve-lifting}
	If $\alpha: I\to B$ is continuous and $\alpha(I)\cap A$ is countable, then there exists a continuous map $\widetilde\alpha: I\to P$ with $Q\widetilde\alpha=\alpha$.
\end{lemma}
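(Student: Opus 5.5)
Put $C=\alpha(I)$. It is compact because it is a continuous image of $I$, and $C\cap A$ is countable by hypothesis. Lemma~\ref{lem:countable} therefore provides a homeomorphism $\Theta_C:F_C\to C\times I$ over $C$ that is strictly increasing on every fiber and preserves both endpoints. We then define
\[
\widetilde\alpha(k)(t)=\rho\bigl(\Theta_C^{-1}(\alpha(k),t)\bigr),\qquad k,t\in I .
\]
By the criterion recalled at the end of the Definitions section, with $K=I$, it suffices to check that the adjoint $(k,t)\mapsto\widetilde\alpha(k)(t)$ is continuous as a map $I\times I\to Z$ and that each $\widetilde\alpha(k)$ belongs to $P$. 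Continuity into $Z$ is immediate, since the adjoint is the composite
\[
I\times I\xrightarrow{\alpha\times\id}C\times I\xrightarrow{\Theta_C^{-1}}F_C\hookrightarrow F\xrightarrow{\rho}Z .
\]
The criterion then gives continuity of $\widetilde\alpha:I\to P$, because $I\times I$ is $\Delta$-generated and the map therefore factors through $X=\kd Z$.

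Next we check that $\widetilde\alpha(k)\in P$ for fixed $k$. Set $b=\alpha(k)$. The map $t\mapsto\Theta_C^{-1}(b,t)$ is the inverse of the strictly increasing bijection $h_C|_{F_b}:F_b\to I$, so it is an increasing homeomorphism $I\to F_b$ for the fiber order of Lemma~\ref{lem:fibers}. It sends $0$ to the lower endpoint, which lies in $E_-$, and $1$ to the upper endpoint, which lies in $E_+$. Composing with $\rho$ gives a continuous path into $X$, again because $I$ is $\Delta$-generated. Its image is $J_b$, and it is nondecreasing for the order transported to $J_b$. It runs from $u$ to $v$, so it is a directed path from $u$ to $v$, i.e.\ an element of $P$. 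Its interval index is $b$, which gives $Q\widetilde\alpha(k)=\alpha(k)$.

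I expect no real obstacle. The only point needing care is the continuity bookkeeping between $Z$ and $X=\kd Z$, and the coreflection argument stated in the Definitions section handles it. The other point to watch is that Lemma~\ref{lem:countable} is applied to $C=\alpha(I)$ rather than to all of $B$; this is exactly where the countability hypothesis on $\alpha(I)\cap A$ is used. The resulting path $\widetilde\alpha(k)$ is moreover regular, which is consistent with the later remarks on regular paths.
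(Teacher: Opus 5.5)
Your proposal is correct and follows the paper's proof: apply Lemma~\ref{lem:countable} to $C=\alpha(I)$, take the adjoint $\rho\circ\Theta_C^{-1}\circ(\alpha\times\id_I)\colon I\times I\to Z$, and conclude by the exponential-law criterion. Your explicit check that each $\widetilde\alpha(k)$ is a strictly increasing directed path from $u$ to $v$ with image $J_{\alpha(k)}$ simply spells out the paper's one-line remark to that effect.
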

\begin{proof}
	The set $C=\alpha(I)$ is compact. Using Lemma~\ref{lem:countable}, define the continuous map
	\[
	e: I\times I\longrightarrow Z,
	\qquad e(r,t)=\rho\bigl(\Theta_C^{-1}(\alpha(r),t)\bigr).
	\]
	For each $r$, the path $t\mapsto e(r,t)$ is strictly increasing from $u$ to $v$ in $J_{\alpha(r)}$. The exponential law gives the required continuous map into $P$.
\end{proof}

\begin{proposition}\label{prop:trace-square}
The map $Q$ is an ordinary topological quotient onto the square $B$ with its usual topology. Consequently, the quotient in $\TopD$ is also $B$:
\[
\dT(X)(u,v)\cong B.
\]
\end{proposition}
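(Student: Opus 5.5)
We must show that $Q$ is a quotient map: a subset $U\subseteq B$ is open whenever $Q^{-1}(U)$ is open in $P$. Continuity of $Q$ is Lemma~\ref{lem:Q-continuous}, and surjectivity follows from Lemma~\ref{lem:curve-lifting} applied to constant curves (or simply because each $J_b$ carries a path). Since $B$ is a metrizable square, hence first countable and sequential, it suffices to prove that $U$ is sequentially open. So let $b_n\to b$ in $B$ with $b\in U$, and suppose, for contradiction, that infinitely many $b_n$ lie outside $U$. Passing to a subsequence, all $b_n\notin U$. Choose a continuous curve $\alpha:I\to B$ with $\alpha(1/n)=b_n$, $\alpha(0)=b$, built by concatenating straight segments $[b_{n+1},b_n]$ and reparametrizing so that $\alpha$ is continuous at $0$ (this uses $b_n\to b$).

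The key point is to arrange that $\alpha(I)\cap A$ is countable. A straight segment between two distinct points meets the circle $A$ in at most two points, and a degenerate segment is a single point. Thus the image of $\alpha$, a countable union of segments together with $\{b\}$, meets $A$ in countably many points. Lemma~\ref{lem:curve-lifting} then gives a continuous lift $\widetilde\alpha:I\to P$ with $Q\widetilde\alpha=\alpha$.

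Now $\widetilde\alpha^{-1}(Q^{-1}(U))=\alpha^{-1}(U)$ is open in $I$, and it contains $0$ because $\alpha(0)=b\in U$. It therefore contains $1/n$ for all large $n$, which gives $b_n\in U$, a contradiction. Hence $U$ is open and $Q$ is a topological quotient. The only delicate step is making $\alpha$ continuous at $0$ while meeting $A$ only countably; segments handle this cleanly, so I expect no real obstacle there.

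For the second assertion, the trace space is the quotient of $P$ in $\TopD$ by the relation whose classes are exactly the fibers of $Q$, as shown above. Colimits in $\TopD$ are computed as ordinary colimits, since $\kd$ is a coreflector onto a full subcategory closed under ordinary colimits. So the $\TopD$-quotient is the ordinary quotient of $P$, namely $B$ by the first part. Moreover $B$ is already $\Delta$-generated, being a finite polyhedron, so no further kelleyfication changes it. Therefore $\dT(X)(u,v)\cong B$.
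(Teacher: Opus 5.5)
Your proof is correct and follows essentially the same route as the paper. You test openness of $U$ with a sequence $b_n\to b$ lying outside $U$, join the points by straight segments so that the curve meets $A$ in only countably many points, lift the curve via Lemma~\ref{lem:curve-lifting}, and get a contradiction from $\alpha^{-1}(U)$ being an open set containing $0$; your $\TopD$ step (quotients of $\Delta$-generated spaces are ordinary quotients, and $B$ is $\Delta$-generated) matches the paper's closing remark, stated slightly more explicitly.
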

\begin{proof}
The usual topology of the square can be tested by curves obtained by joining a convergent sequence with straight segments. Such curves meet $A$ in at most countably many points and hence lift by Lemma~\ref{lem:curve-lifting}. Here is the precise argument.

The map $Q$ is surjective because each $J_b$ admits a directed parametrization. By Lemma~\ref{lem:Q-continuous}, it remains to prove that if $Q^{-1}(U)$ is open, then $U$ is open. If $U\subset B$ is not open, there exist $b\in U$ and a sequence $(b_n)_{n\geq1}$ in $B\setminus U$ with $b_n\to b$. Define $\alpha(0)=b$, $\alpha(1/n)=b_n$, and let $\alpha$ be affine on each $[1/(n+1),1/n]$. Convexity of $B$ keeps the curve in $B$, and convergence of $(b_n)$ gives continuity at $0$.

Each nondegenerate straight segment meets the circle $A$ in at most two points; a degenerate segment contributes at most one. The image of $\alpha$ is the union of these countably many segments and $\{b\}$. Thus $\alpha(I)\cap A$ is countable. By Lemma~\ref{lem:curve-lifting}, $\alpha$ has a continuous lift to $P$. If $Q^{-1}(U)$ were open, then
\[
\alpha^{-1}(U)
=\widetilde\alpha^{-1}(Q^{-1}(U))
\]
would be an open subset of $I$ containing $0$ and excluding all $1/n$, which is impossible.

The equivalence classes are exactly the fibers of $Q$, as shown above. The usual square is $\Delta$-generated, so this ordinary quotient already lies in $\TopD$ and has the required categorical universal property.
\end{proof}

\section{A nontrivial loop killed by the trace quotient}

The boundary $\partial B$ is disjoint from $A$, so Lemma~\ref{lem:countable} already provides a continuous family of paths indexed by $\partial B$. Our choice of $\delta_a$ makes this loop particularly simple. For $b\in\partial B$ and $a\in A$, the reverse triangle inequality gives
\[
\|b-a\|\geq\|b\|-1\geq1.
\]
Hence $\delta_a(b)=1$ for every $a$, so $g_a(b,t)=t$. In particular, there is an explicit family
\begin{equation}\label{eq:boundary-loop}
\ell: \partial B\longrightarrow P,
\qquad
\ell(b)(t)=\rho\bigl(b,t,(t)_{a\in A}\bigr).
\end{equation}
The adjoint map $\partial B\times I\to Z$ is continuous, and each of its paths traverses an entire interval $J_b$ in a strictly increasing manner. The exponential law gives continuity of $\ell$ into $P$. By construction, $Q\ell$ is the inclusion $\partial B\subset B$:
\begin{equation}\label{eq:boundary-index}
Q\ell=\operatorname{incl}_{\partial B\subset B}.
\end{equation}
Choose $b_*=(2,0)\in\partial B$ and use $\ell(b_*)$ as the basepoint of $P$.

\begin{proof}[Proof of Theorem~\ref{thm:main}]
The space $X$ is Hausdorff and belongs to $\TopD$ by construction, and its directed structure is saturated by Proposition~\ref{prop:saturated}. Proposition~\ref{prop:trace-square} identifies the trace quotient with $Q: P\to B$. Since $B$ is contractible, $Q_*[\ell]=0$. We prove that $[\ell]\ne0$ in $\pi_1(P,\ell(b_*))$ by showing that $\ell$ cannot be filled by a disk of directed paths.

Suppose such a filling existed. It would be a continuous extension $L: B\to P$ with $L|_{\partial B}=\ell$. Put $f=QL$. This gives the commutative diagram
\[
\begin{tikzcd}[column sep=large,row sep=large]
\partial B \arrow[r,"\ell"] \arrow[d,hook]
  & P \arrow[d,"Q"] \\
B \arrow[r,"f"'] \arrow[ur,"L",dashed]
  & B .
\end{tikzcd}
\]
Equation~\eqref{eq:boundary-index} implies $f|_{\partial B}=\id$. Every such map $f$ is surjective: if it omitted an interior point, radial projection from that point, composed with $f$, would retract $B$ onto $\partial B$, contrary to the no-retraction theorem \cite[Corollary~2.15]{MR1867354}.

Consider the map
\begin{equation}\label{eq:surjective-evaluation}
	e_L: B\times I\longrightarrow Z,
	\qquad e_L(b,t)=L(b)(t).
\end{equation}
It is continuous as the composite of $L\times\id_I$, evaluation into $X$, and the identity $X\to Z$. It is surjective: for every $c\in B$, choose $b$ with $f(b)=c$. The directed path $L(b)$ traverses the whole of $J_c$. Since the intervals $J_c$ cover the underlying set of $Z$, every point lies in the image of $e_L$.

Consider the closed middle slice
\begin{equation}\label{eq:closed-slice}
	M=\{s=1/2\}\subset Z.
\end{equation}
If $b\ne a$, strict monotonicity of $g_a(b,-)$ and the equality $g_a(b,1/2)=1/2$ imply $t_a=1/2$ on $M\cap J_b$. Over $b=a$, the coordinate $t_a$ ranges over $[1/3,2/3]$. Define
\begin{equation}\label{eq:manyopens}
	U_a=M\cap\{t_a>3/5\},\qquad a\in A.
\end{equation}
Each $U_a$ is open in $M$ because $t_a$ is continuous. Since $1/2<3/5<2/3$, it is nonempty and contained in $J_a$. The intervals $J_a$ meet only at $u$ and $v$, neither of which belongs to $M$, so these open sets are pairwise disjoint. Let
\[
D=e_L^{-1}(M)\subset B\times I.
\]
The space $D$ is closed in $B\times I$, hence compact metrizable and second countable. Since $e_L$ is surjective, the sets
\[
(e_L|_D)^{-1}(U_a),\qquad a\in A,
\]
are pairwise disjoint nonempty open subsets of $D$. Since $A$ is uncountable, this contradicts the fact that a second countable space has at most countably many pairwise disjoint nonempty open subsets.

Thus $\ell$ is not nullhomotopic. Its image under $Q$ is the boundary loop in the contractible square, so $Q_*[\ell]=0$. Hence $Q_*$ is not injective on $\pi_1$, and $Q$ is not a weak homotopy equivalence.
\end{proof}

\begin{remark}\label{rem:ordinary-topology}
The conclusion of Theorem~\ref{thm:main} also holds for the directed path space $P_{\mathrm{co}}$ with its ordinary compact-open topology. Indeed, since disks and spheres are $\Delta$-generated, the canonical map $\kd U\to U$ is a weak homotopy equivalence for every space $U$. In particular, $P=\kd P_{\mathrm{co}}\to P_{\mathrm{co}}$ induces isomorphisms on homotopy groups. The index map $Q_{\mathrm{co}}: P_{\mathrm{co}}\to B$ is continuous by the evaluation argument of Lemma~\ref{lem:Q-continuous}, and it is a quotient map because its composite with $P\to P_{\mathrm{co}}$ is the quotient map $Q$.
\end{remark}

\begin{corollary}\label{cor:regular}
Let $P_{\mathrm{reg}}$ be the space of regular directed paths from $u$ to $v$, equipped with the $\Delta$-kelleyfication of the compact-open topology. The quotient by the group $\operatorname{Homeo}_+(I)$ of increasing homeomorphisms of $I$ is homeomorphic to $B$, and the quotient map
\[
Q_{\mathrm{reg}}: P_{\mathrm{reg}}\longrightarrow
P_{\mathrm{reg}}/\operatorname{Homeo}_+(I)\cong B
\]
is not a weak homotopy equivalence. These conclusions also hold with the ordinary compact-open topology and the ordinary topological quotient.
\end{corollary}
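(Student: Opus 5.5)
The plan is to rerun the proofs of Proposition~\ref{prop:trace-square} and Theorem~\ref{thm:main} with $P_{\mathrm{reg}}$ in place of $P$, checking at each step that every map we construct stays inside the regular paths.

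\textbf{Step 1: identify the orbit set with $B$.} Every element of $P_{\mathrm{reg}}$ is nonconstant, since $u\ne v$. It therefore lies in a unique $J_b$ and traverses all of it. Being nondecreasing and regular, it is a strictly increasing homeomorphism $I\to J_b$. Any two such homeomorphisms onto the same $J_b$ differ by an element of $\operatorname{Homeo}_+(I)$, namely $r^{-1}r'$. Paths in different $J_b$ have different images, so they lie in different orbits. Hence the orbits are exactly the fibres of $Q_{\mathrm{reg}}=Q|_{P_{\mathrm{reg}}}$. This map is continuous because $P_{\mathrm{reg}}\subset P$ carries the $\Delta$-kelleyfication of a subspace topology, and $Q$ is continuous by Lemma~\ref{lem:Q-continuous}.

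\textbf{Step 2: $Q_{\mathrm{reg}}$ is an ordinary quotient onto $B$.} For each $r$, the path $t\mapsto e(r,t)$ in the proof of Lemma~\ref{lem:curve-lifting} is strictly increasing, so the lift $\widetilde\alpha$ lands in $P_{\mathrm{reg}}$. It is continuous into $P_{\mathrm{reg}}$ by the exponential law applied to the subspace. The sequence-curve argument of Proposition~\ref{prop:trace-square} then applies verbatim. Surjectivity follows because each $J_b$ has a strictly increasing parametrization $r_b$. Since $B$ is $\Delta$-generated, this ordinary quotient is also the quotient in $\TopD$.

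\textbf{Step 3: $Q_{\mathrm{reg}}$ is not a weak homotopy equivalence.} The boundary loop $\ell$ in \eqref{eq:boundary-loop} consists of strictly increasing paths, so it is a loop in $P_{\mathrm{reg}}$, and $Q_{\mathrm{reg}}\ell$ is null because $B$ is contractible. Suppose there were a filling $L\colon B\to P_{\mathrm{reg}}$. Composing with the continuous inclusion $P_{\mathrm{reg}}\to P$ gives a filling in $P$, which the proof of Theorem~\ref{thm:main} rules out. Hence $[\ell]\ne0$ in $\pi_1(P_{\mathrm{reg}})$, and $Q_{\mathrm{reg}}$ is not injective on $\pi_1$.

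\textbf{Step 4: the ordinary compact-open topology.} We follow Remark~\ref{rem:ordinary-topology}. The map $\kd U\to U$ is a weak equivalence, so the loop argument transfers. The ordinary index map is continuous by evaluation. It is a quotient map because precomposing it with the continuous identity from the kelleyfied space gives the quotient of Step 2.

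\textbf{Main obstacle.} The only delicate point is the continuity of the inclusion $P_{\mathrm{reg}}\hookrightarrow P$. I would handle it through the coreflection: the composite $\kd(\text{subspace})\to\text{subspace}\to P_{\mathrm{co}}$ is continuous, and its source is $\Delta$-generated, so it factors continuously through $\kd P_{\mathrm{co}}=P$.
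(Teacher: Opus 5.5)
Your proposal is correct and follows the paper's proof essentially step for step. You identify the orbits with the fibres of $Q|_{P_{\mathrm{reg}}}$, check that the lifts of Lemma~\ref{lem:curve-lifting} and the loop $\ell$ consist of regular paths, push a hypothetical filling into $P$, and transfer to the compact-open topology as in Remark~\ref{rem:ordinary-topology}; your coreflection argument for the continuity of $P_{\mathrm{reg}}\hookrightarrow P$ just makes explicit a step the paper uses tacitly.
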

\begin{proof}
Since $u\ne v$, a directed path from $u$ to $v$ is regular precisely when it is strictly increasing on its interval $J_b$. Such a path is a homeomorphism from $I$ onto $J_b$. Any two such paths in $J_b$ therefore differ by an increasing homeomorphism of $I$, so the orbits are exactly the fibers of $Q_{\mathrm{reg}}=Q|_{P_{\mathrm{reg}}}$.

The lifts in Lemma~\ref{lem:curve-lifting} take values in $P_{\mathrm{reg}}$ and are continuous for its topology by the same exponential-law argument. The proof of Proposition~\ref{prop:trace-square} therefore applies to $Q_{\mathrm{reg}}$, giving the quotient topology of $B$. Similarly, the loop $\ell$ of \eqref{eq:boundary-loop} is continuous into $P_{\mathrm{reg}}$. A filling in $P_{\mathrm{reg}}$ would give a filling in $P$ by the continuous inclusion, contradicting the proof of Theorem~\ref{thm:main}. Thus $Q_{\mathrm{reg}}$ kills a nonzero fundamental-group element.

The ordinary compact-open case follows from the same quotient-map argument and the invariance of homotopy groups under $\Delta$-kelleyfication, as in Remark~\ref{rem:ordinary-topology}.
\end{proof}


\begin{thebibliography}{10}

	\bibitem{reparam}
	U.~Fahrenberg and M.~Raussen.
	\newblock Reparametrizations of continuous paths.
	\newblock {\em J. Homotopy Relat. Struct.}, 2(2):93--117, 2007.
	\newblock \url{https://eudml.org/doc/233713}.

	\bibitem{DAT_book}
	L.~Fajstrup, E.~Goubault, E.~Haucourt, S.~Mimram, and M.~Raussen.
	\newblock {\em Directed algebraic topology and concurrency}.
	\newblock SpringerBriefs Appl. Sci. Technol. Springer, 2016.
	\newblock \url{https://doi.org/10.1007/978-3-319-15398-8}.

	\bibitem{FR}
	L.~Fajstrup and J.~Rosick{\'y}.
	\newblock A convenient category for directed homotopy.
	\newblock {\em Theory Appl. Categ.}, 21:7--20, 2008.

	\bibitem{Moore3}
	P.~Gaucher.
	\newblock Homotopy theory of {M}oore flows ({III}).
	\newblock {\em North-West. Eur. J. Math.}, 10:55--113, 2024.

	\bibitem{GlobularNaturalSystem}
	P.~Gaucher.
	\newblock Natural homotopy of multipointed $d$-spaces.
	\newblock {\em Math. J. Okayama Univ.}, 68:13--62, 2026.

	\bibitem{mg}
	M.~Grandis.
	\newblock Directed homotopy theory. {I}.
	\newblock {\em Cah. Topol. G\'eom. Diff\'er. Cat\'eg.}, 44(4):281--316, 2003.

	\bibitem{MR1867354}
	A.~Hatcher.
	\newblock {\em Algebraic topology}.
	\newblock Cambridge University Press, Cambridge, 2002.
	\newblock \url{https://pi.math.cornell.edu/~hatcher/AT/ATpage.html}.

	\bibitem{Koszmider16}
	P.~Koszmider.
	\newblock On the problem of compact totally disconnected reflection of nonmetrizability.
	\newblock {\em Topology Appl.}, 213:154--166, 2016.
	\newblock \url{https://doi.org/10.1016/j.topol.2016.08.017}.

	\bibitem{zbMATH01461253}
	J.~R. Munkres.
	\newblock {\em Topology}.
	\newblock Prentice Hall, Upper Saddle River, NJ, 2nd edition, 2000.

	\bibitem{MR2521708}
	M.~Raussen.
	\newblock Trace spaces in a pre-cubical complex.
	\newblock {\em Topology Appl.}, 156(9):1718--1728, 2009.
	\newblock \url{https://doi.org/10.1016/j.topol.2009.02.003}.

\end{thebibliography}

\end{document}